\theoremstyle{plain}
\newtheorem{master}{Master}[section]
\newtheorem{thm}[master]{Theorem}
\newtheorem{fact}[master]{Fact}
\newtheorem{question}[master]{Question}
\theoremstyle{definition}
\newtheorem{defin}[master]{Definition}
\theoremstyle{remark}
\newtheorem{remark}[master]{Remark}
\numberwithin{equation}{section}
\newcommand{\Rea}{\mathbb{R}}
\newcommand{\Nat}{\mathbb{N}}
\newcommand{\Rat}{\mathbb{Q}}
\newcommand{\Norm}{\|\cdot \|}
\begin{document}
\title[Non-commutative uniform Banach group]{An example of a non-commutative uniform Banach group}
\author[M. Doucha]{Michal Doucha}
\address{Institute of Mathematics\\ Polish Academy of Sciences\\
00-656 Warszawa, Poland}
\email{m.doucha@post.cz}
\keywords{uniform Banach group, free group, Hilbert's fifth problem, Lipschitz-free space}
\thanks{The author was supported by IMPAN's international fellowship programme partially sponsored by PCOFUND-GA-2012-600415.}
\subjclass[2010]{46B20,22A05}
\begin{abstract}
Benyamini and Lindenstrauss mention in their monograph \emph{Geometric nonlinear functional analysis Vol. 1.,
American Mathematical Society Colloquium Publications, 48. American Mathematical Society, Providence, RI, 2000} that there is no known example of a non-commutative uniform Banach group. Prassidis and Weston also asked whether there is a non-commutative example. We answer this problem affirmatively. We construct a non-commutative uniform Banach group which has the free group of countably many generators as a dense subgroup.

Moreover, we show that our example is a free one-generated uniform Banach group whose metric induced by the norm is bi-invariant.
\end{abstract}
\maketitle
\section*{Introduction}
A uniform Banach group is a Banach space equipped with an additional group structure so that the group unit coincides with the Banach space zero and the group operations are uniformly continuous with respect to norm. Uniform Banach groups were introduced and studied by Enflo in \cite{Enf1}, \cite{Enf2} with connection to the infinite-dimensional version of the Hilbert's fifth problem. Typical example comes when we are given two Banach spaces $X$ and $Y$ and a uniform homeomorphism $\phi:X\rightarrow Y$ between them such that $\phi(0)=0$. Then we can define a (commutative) group operation $\cdot$ on $X$ as follows: for $x,y\in X$ we set $x\cdot y=\phi^{-1}(\phi(x)+\phi(y))$. Note that unless $\phi$ is linear there is no a priori connection between the two group operations $+$ (resp. $+_X$) and $\cdot$.

A comprehensive source of information about uniform Banach groups is Chapter 17 in \cite{BeLi}. As mentioned there, the following problem was left open. \emph{Does there exist a non-commutative uniform Banach group?} This question was also asked by Prassidis and Weston in \cite{PraWes} and \cite{PraWes2}. Here we give a positive answer to this question. The following is the main result.
\begin{thm}\label{main}
There exists an infinite dimensional separable Banach space $(\mathbb{X},+,0,\Norm)$ equipped with an additional group structure $(\cdot,{}^{-1},0)$ whose unit coincides with the Banach space zero, the group multiplication $\cdot$ is invariant with respect to the norm $\Norm$, and $F_\infty$, the free group of countably many generators, is a dense subgroup of $(\mathbb{X},\cdot,{}^{-1},0)$.

In particular, there exists a non-commutative uniform Banach group.
\end{thm}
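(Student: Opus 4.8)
The plan is to realize $\mathbb{X}$ as the metric completion of the free group $F_\infty$ equipped with a suitable \emph{bi-invariant} metric $d$, and then to transport both required structures onto this completion: the additive Banach structure $(+,0,\Norm)$, coming from the fact that the completion will be a Banach space with $d(u,v)=\|u-v\|$, and the non-abelian group structure $(\cdot,{}^{-1},0)$, obtained by extending the multiplication and inversion of $F_\infty$ by uniform continuity. The two operations live on the same underlying metric space but are genuinely different: $+$ is abelian and is the vector addition of the Banach space, while $\cdot$ is the extension of the free-group operation, and the \emph{single} metric $d$ is required to be invariant under the translations of both. Thus once the metric is in place, the whole theorem reduces to checking that these a priori unrelated structures can coexist.

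First I would fix the metric. Bi-invariance of $d$ means $d(a,b)=N(a^{-1}b)$ for a conjugation-invariant, symmetric, subadditive length function $N\colon F_\infty\to[0,\infty)$ with $N(w)=0$ iff $w=e$; the largest such $N$ extending prescribed distances on a set of generators is a Graev-type length, which guarantees bi-invariance for free. The delicate requirement is that the completion of $(F_\infty,d)$ carry a compatible linear structure, i.e.\ be isometric to a Banach space in which $F_\infty$ is dense. I would arrange this by modeling $N$ on a Lipschitz-free (optimal-transport) norm: one chooses the free generators to be indexed by, and metrically to behave like, the scalar multiples $\{q\cdot g:q\in D\}$ of a single element $g$ for a countable dense set $D$ (this is what makes the example \emph{free one-generated}, the countably many free generators being produced from one generator by the ambient scaling), and defines distances by a transportation cost that is at once conjugation-invariant and convex. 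The convex/transportation nature forces the completion to be an $\ell^1$-type, hence linear, space, while conjugation-invariance keeps $d$ bi-invariant; reconciling these two demands on one formula is the technical heart of the argument and the step I expect to be the main obstacle.

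Granting such a metric, the remaining verifications are routine and I would carry them out by density. Bi-invariance gives $\|a\cdot u-b\cdot v\|\le\|u-v\|+\|a-b\|$ and $\|u^{-1}-v^{-1}\|=\|u-v\|$ on $F_\infty$, so multiplication is jointly $1$-Lipschitz and inversion is an isometry; both therefore extend uniquely to uniformly continuous maps on the completion $\mathbb{X}$. Associativity, the identity law (with unit the basepoint $e$, which is the Banach-space zero $0$), and the inverse law hold on the dense subgroup $F_\infty$ and pass to $\mathbb{X}$ by continuity, so $(\mathbb{X},\cdot,{}^{-1},0)$ is a group whose operations are uniformly continuous with respect to the norm $\Norm$; this is exactly a uniform Banach group, with the induced metric bi-invariant by construction. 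Separability and infinite-dimensionality follow because $F_\infty$ is a countable dense subset spanning infinitely many independent directions, and non-commutativity is inherited from the dense non-abelian subgroup $F_\infty$, yielding the desired non-commutative example.
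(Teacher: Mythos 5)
Your high-level inspiration is exactly the one behind the paper --- combine a Graev-type bi-invariant metric on a free group with a Lipschitz-free (transportation) norm, so that the completion is simultaneously a Banach space and a completion of $F_\infty$ --- but the construction as you describe it has a genuine gap, and it is precisely at the step you flag as ``the main obstacle.'' The problem is not just that the metric formula is missing; it is that your underlying countable set cannot work. You take the dense set to be $F_\infty$ on generators $\{q\cdot g : q\in D\}$, i.e.\ words in dyadic multiples of a single element. On such a set there is no way to define the sum of two group words: nothing in your setup makes $w_1+w_2$ a point of the space when $w_1,w_2$ are words of length $>1$, and a completion cannot ``acquire'' a vector addition that is not already densely defined (the assertion that a convex transportation cost ``forces the completion to be an $\ell^1$-type, hence linear, space'' has no justification --- the completion of a countable group with a bi-invariant metric is just a complete metric group). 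The paper resolves this by building the countable set $X$ through an \emph{alternating} induction: at odd stages one closes under the free-group operations (all new elements from the previous linear stage become new free generators), and at even stages one closes under addition and dyadic scalar multiplication (all new elements from the previous group stage become new linearly independent basis vectors). Thus the free generating set of $F_\infty$ is not $\{qg\}$ but the much richer set produced by this closure, and every point of $X$ genuinely carries both structures, with the two structures kept algebraically independent of each other.

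The second part of the gap concerns the metric itself. The paper explicitly notes (in its concluding discussion) that the natural idea of alternating the two known free constructions --- Lipschitz-free space, then Graev free group, then Lipschitz-free again, etc.\ --- \emph{does not literally work}, because for a general point $x$ of the free group over a Lipschitz-free space the distance function $\mathrm{dist}(x,\cdot)$ restricted to the linear part fails to be convex, which is exactly the hypothesis needed to extend a norm freely over a metric extension. So there is no single ``conjugation-invariant and convex transportation cost'' formula of the kind you posit; the paper instead defines the metric by induction on the \emph{finite} stages $X_n$, alternately extending a bi-invariant metric $\rho_n$ (odd stages) and a norm $\Norm_n$ (even stages), each characterized as the greatest function satisfying a short list of compatibility conditions, and verifies by induction that each extension restricts correctly to the previous stage. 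Your final paragraph (extending multiplication and inversion to the completion by uniform continuity, passing the group laws to the closure, separability, infinite dimension, non-commutativity from the dense copy of $F_\infty$) is correct and matches the paper, but it is the routine part; without the alternating closure of the underlying set and the finitary inductive construction of the metric, the object you would be completing does not exist.
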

\section{Preliminaries}
 We assume the reader to know basic facts about uniform spaces. We refer to Chapter 8 in \cite{Eng} for more information.

Recall that for any topological group $G$ there are two distinguished compatible uniformities: the left uniformity $\mathcal{U}_L$ generated by basic entourages of the form $\{(g,h):g^{-1}h\in U\}$, where $U$ is a basic neighborhood of the identity in $G$; and the right uniformity $\mathcal{U}_R$ which is generated by basic entourages of the form $\{(g,h):hg^{-1}\in U\}$, where $U$ is again a basic neighborhood of the identity in $G$.

We start with the following definition given by Enflo in \cite{Enf1}.
\begin{defin}
Let $G$ be a topological group. $G$ is called \emph{uniform} if there exists a compatible uniformity on $G$ such that the group multiplication is uniformly continuous with respect to that uniformity.
\end{defin}

Below, we collect some basic facts about uniform groups.\\
\begin{fact}
~\\
\begin{enumerate}
\item (see Proposition 1.1.3. in \cite{Enf1}) A topological group $G$ is uniform if and only if the left and right uniformities coincide.
\item (folklore) That is in turn equivalent with the fact that there exists a neighborhood basis of the unit of $G$ consisting of open sets closed under conjugation. Such groups are more often called \emph{SIN} (small invariant neighborhood) groups, or also \emph{balanced} groups.
\item (folklore) In case that $G$ is metrizable, i.e. the neighborhood basis can be taken countable, G is uniform, resp. SIN, if and only if it admits a compatible bi-invariant metric; i.e. metric $d$ such that for any $x,y,a,b\in G$ it holds that $d(x,y)=d(axb,ayb)$ (the same reasoning gives that if $G$ is not metrizable then its topology is given by a family of bi-invariant pseudometrics).\\

\end{enumerate}
\end{fact}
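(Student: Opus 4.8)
The plan is to treat the three items in turn, taking (1) essentially from Enflo's cited Proposition and concentrating on the elementary characterizations (2) and (3). For (1) the substantive direction -- that uniform continuity of multiplication with respect to \emph{some} compatible uniformity forces the two one-sided uniformities to agree -- is exactly the content of Proposition 1.1.3 in \cite{Enf1}, which I would simply invoke. The reverse implication I would fold into the argument for (2): once we know $\mathcal{U}_L=\mathcal{U}_R$ admits a conjugation-invariant neighborhood basis, writing $(gh)^{-1}(g'h')=(h^{-1}uh)v$ with $u=g^{-1}g'$ and $v=h^{-1}h'$ shows that conjugation-invariant neighborhoods keep $h^{-1}uh$ controlled, so the common uniformity does make multiplication uniformly continuous.

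For (2) I would first prove the easy direction: if $\{U\}$ is a neighborhood basis of the unit of conjugation-invariant open sets, then $gU=Ug$ for every $g$, whence $g^{-1}h\in U\iff hg^{-1}\in U$; this says the left basic entourage determined by $U$ coincides with the right one, so $\mathcal{U}_L=\mathcal{U}_R$. For the converse I would use the standard averaging trick. Assuming $\mathcal{U}_L=\mathcal{U}_R$, given a neighborhood $U$ of the unit the left entourage $E_L(U)$ lies in $\mathcal{U}_R$, so there is an open symmetric $V$ with $E_R(V)\subseteq E_L(U)$; unwinding the definitions (take the pair $g=a$, $h=va$) this means $a^{-1}Va\subseteq U$ for all $a\in G$. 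I would then set
\[ N=\bigcup_{a\in G} a^{-1}Va, \]
observe that $N$ is open, contains $V$ (hence is a neighborhood of the unit), is contained in $U$, and is invariant under conjugation by reindexing the union. This yields the desired conjugation-invariant basis.

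For (3), in the metrizable case I would run the Birkhoff--Kakutani construction starting from a \emph{countable} conjugation-invariant basis. Choose symmetric conjugation-invariant neighborhoods $V_n$ of the unit with $V_{n+1}^3\subseteq V_n$ forming a basis, build from them the usual subadditive symmetric length function $\ell$ with $\ell(g)$ comparable to $2^{-n}$ at level $n$, and put $d(x,y)=\ell(x^{-1}y)$. Left-invariance of $d$ is automatic; the new ingredient is that conjugation-invariance of every $V_n$ forces $\ell(b^{-1}gb)=\ell(g)$, because a factorization $g=v_1\cdots v_k$ with $v_i\in V_{n_i}$ gives $b^{-1}gb=(b^{-1}v_1b)\cdots(b^{-1}v_kb)$ with the same factors' levels. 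Then
\[ d(xb,yb)=\ell\bigl(b^{-1}(x^{-1}y)b\bigr)=\ell(x^{-1}y)=d(x,y), \]
which together with left-invariance gives bi-invariance. In the non-metrizable case the same recipe applied to each countable descending chain of conjugation-invariant neighborhoods yields a bi-invariant pseudometric, and the family of these generates the topology.

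I expect the only genuinely delicate point to be the converse direction of (2): producing an honest conjugation-invariant \emph{open} neighborhood inside a prescribed $U$, where one must resist using $\bigcap_a aUa^{-1}$ (which need not be open) and instead pass through the coincidence of uniformities to extract the controllable $V$ with $a^{-1}Va\subseteq U$ before taking the conjugation-closed union. The verification that the Birkhoff--Kakutani length function inherits conjugation-invariance is then routine, since every step of that construction refers only to membership in the conjugation-invariant sets $V_n$.
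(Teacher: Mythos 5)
Your proof is correct, but there is essentially nothing in the paper to compare it against: the paper states this Fact without proof, citing Enflo's Proposition 1.1.3 for item (1) and labelling items (2) and (3) as folklore. What you have done is supply the folklore arguments, and they are the standard ones. The reindexed union $N=\bigcup_{a\in G} a^{-1}Va$, with $V$ extracted from the coincidence $\mathcal{U}_L=\mathcal{U}_R$ via the pairs $(a,va)$, correctly produces a conjugation-invariant open neighborhood inside a prescribed $U$, and you rightly avoid the trap of $\bigcap_a aUa^{-1}$, which need not be open. The factorization $(gh)^{-1}(g'h')=(h^{-1}uh)v$ correctly recovers uniform continuity of multiplication from such a basis, settling the unproved direction of (1). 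Running Birkhoff--Kakutani on a countable chain of symmetric conjugation-invariant neighborhoods with $V_{n+1}^3\subseteq V_n$ does yield a bi-invariant compatible metric, since every ingredient of the length function tests only membership in the conjugation-invariant sets $V_n$. It is worth observing that your device in (2) is the very one the paper does display, in its Example (a): there compactness (rather than the coincidence of uniformities) produces the neighborhood $W$ with $gWg^{-1}\subseteq U$ for all $g$, and then $\bigcup_{g\in G} gWg^{-1}$ is the invariant neighborhood. Two minor points you should patch, neither a real gap: first, justify that a conjugation-invariant basis can be refined to one that is also symmetric and satisfies $V_{n+1}^3\subseteq V_n$ while remaining conjugation-invariant (intersecting a conjugation-invariant set with its inverse preserves invariance, and one may always pass to an invariant subneighborhood of any $W$ with $W^3\subseteq V_n$); second, item (3) is an equivalence, and you omit the converse, which is immediate since bi-invariance of $d$ gives $d(a^{-1}ga,e)=d(g,e)$, so every ball $B(e,\varepsilon)$ is conjugation-invariant and $G$ is SIN.
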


\noindent {\bf Examples:}
\begin{enumerate}[(a)]
\item All abelian and compact topological groups are uniform groups. It is obvious for the former. For the latter, consider a compact topological group $G$. Notice that by the continuity of the group operations, for any open neighborhood $U$ of the identity and for any group element $g$ there are open neighborhoods $V_g$ of $g$ and $W_g\subseteq U$ of the identity so that $V_g\cdot W_g\cdot V_g^{-1}\subseteq U$. Then by compactness one can find finitely many elements $g_1,\ldots,g_n\in G$ so that $V_{g_1},\ldots,V_{g_n}$ cover $G$. Take $W=\bigcap_{i\leq n} W_{g_i}$ and notice that for any $g\in G$ we have $g\cdot W\cdot g^{-1}\subseteq U$. Clearly, $\bigcup_{g\in G} g\cdot W\cdot g^{-1}$ is then a conjugacy-invariant open neighborhood of the identity contained in $U$.
\item The Heisenberg group $UT_3^3(\mathbb{R})$ consisting of the upper triangular $3\times 3$-matrices is not uniform. Note that the Heisenberg group is very close to both being abelian and compact. It is a locally compact group of nilpotency class 2. Since $UT_3^3(\mathbb{R})$ is metrizable it is sufficient to show that it does not admit a compatible bi-invariant metric. Suppose for contradicition that $d$ is a compatible bi-invariant metric on $UT^3_3(\mathbb{R})$. Given matrices

\  $A'=\begin{pmatrix}
1 & a' & c'\\
0 & 1 & b'\\
0 & 0 & 1\\
\end{pmatrix}$, $A=\begin{pmatrix}
1 & a & c\\
0 & 1 & b\\
0 & 0 & 1\\
\end{pmatrix}$,   
a computation gives that 

$$A^{-1}A'A=\begin{pmatrix}
1 & a' & c'-ab'+a'b\\
0 & 1 & b'\\
0 & 0 & 1\\
\end{pmatrix}.$$ 

Since $d$ is compatible there exist $r>0$ and $R>0$ such that the open ball of radius $R$ with respect to $d$ centred at $I$, the identity matrix, is contained in the open set of matrices $\begin{pmatrix}
1 & x & z\\
0 & 1 & y\\
0 & 0 & 1\\
\end{pmatrix}$ with $|z| < r$. Since $d$ is compatible, we can choose $A'$ with $a' \neq 0$ and    $d(A',I)<R$. Then by choosing  $b$ sufficiently large we can guarantee that $|c'-ab'+a'b|>r$. It follows that $d(A^{-1}A'A,I)\geq R$. So $d(A',I)\neq d(A^{-1}A'A,I)$, and thus $d$ is not bi-invariant.
\item However, an example of a uniform group that is finite-dimensional Euclidean is the group of unitary $n\times n$-matrices. That follows either from the fact that this group is compact, or one can check that the Hilbert-Schmidt norm there induces a compatible bi-invariant metric.
\item Let $G$ be a group that acts faithfully by isometries on a bounded metric space $(M,d_M)$. The following is then a bi-invariant metric (that makes $G$ a topological uniform group): for $g,h\in G$ set $$d(g,h)=\sup_{x\in M} d_M(gx,hx).$$

Every group with bi-invariant metric is of this form. Indeed, if $G$ is a group with bi-invariant metric $d_G$, then $G$ has a faithfull action by isometries on itself induced by left translations. The formula above defining a metric gives the original metric $d_G$.

\end{enumerate}

\begin{defin}[Enflo, \cite{Enf2}]
A uniform group $G$ is \emph{Banach} if it is uniformly homeomorphic to some Banach space.
\end{defin}
The preceding definition is readily checked to be equivalent with that one mentioned in the introduction; i.e. a Banach space with additional group structure, where the additional group operations are uniformly continuous with respect to norm, and the additional group unit coincides with the Banach space $0$.

Recall the canonical example of a uniform Banach group given by some uniform homeomorphism between Banach spaces that was also mentioned in the introduction. Let us mention here the result of Enflo which says that under some conditions the converse is true. We refer to \cite{Enf1} and \cite{Enf2} for unexplained notions from the statement of the theorem.
\begin{thm}[Enflo, \cite{Enf2}]
Let $G$ be a commutative uniform Banach group such that the corresponding Banach space has roundness $p>1$ and such that moreover:
\begin{itemize}
\item $G$ is {\it uniformly dissipative},
\item for every $x_1,x_2,y\in G$ we have $\|x_1\cdot y-x_2\cdot y\|=o(\|x_1-x_2\|^{1/p})$ uniformly in $x_1,x_2,y$ as $\|x_1-x_2\|\to 0$.

\end{itemize}

Then $G$ is isomorphic to the additive group of some Banach space.
\end{thm}
\subsection{Preliminary discussion of the proof}
Before proceding to the proof of Theorem \ref{main}, let us roughly explain the ideas behind it. We shall construct a countable set $X$ equipped with two group structures. Under the first group structure, $X$ is isomorphic to the free group of countably many generators. Under the second one, $X$ is isomorphic to the minimal subgroup of the real vector space with countable Hamel basis that contains the elements of this Hamel basis and is closed under multiplication by scalars that are dyadic rationals. Moreover, $X$ will get equipped with a metric which is bi-invariant with respect to the non-commutative group operation and which behaves like a norm with respect to the latter group operation. In particular, the completion of $X$ with respect to this metric will become a real Banach space.

Let us mention two `peculiarities' of the construction. First, $X$ is constructed so that there is no connection between these two algebraic structures (i.e. non-commutative group structure and commutative group structure with dyadic scalar multiplication) in the sense that for any $x,y\in X$, the elements $x,y,x\cdot y,x^{-1}$ are linearly independent (in the commutative structure), and similarly, for any $x,y\in X$ and dyadic rationals $\alpha,\beta$ we have that $\alpha x+\beta y$ is a new free group generator (in the non-commutative structure).

The second thing to mention is that the metric on $X$ is constructed by induction on \emph{finite} fragments $X_n$, $n\in \Nat$, of $X$. This will give us a better control of the metric we are defining.

The construction is inspired and in some sense analogous to the construction of Lipschitz-free Banach spaces (we refer the reader to the book \cite{Wea} as the main reference on this subject) and also to the construction of free groups with the Graev metric (see \cite{Gr}). Indeed, the uniform Banach group we construct can be viewed as a free object in an appropriate category and we refer the reader to the last section of the paper where this is discussed. We also discuss there in more detail the similarity between our Banach group and Lipschitz-free Banach spaces and groups with the Graev metric.
\subsection{Some notation regarding free groups and vector spaces}
Let $S$ be some non-empty set and let $w$ be a word over the alphabet $S\cup S^{-1}\cup\{e\}$, where $S^{-1}=\{s^{-1}:s\in S\}$ is a disjoint copy of $S$ interpreted as a set of inverses of elements from $S$ and $e$ is an element not belonging to either $S$ or $S^{-1}$ which shall be interpreted as a group unit. We say that $w=w_1\ldots w_n$ is \emph{irreducible} if either $n=1$ and $w=w_1=e$, or $n>1$ and for every $i\leq n$, $w_i\neq e$ and there is no $i<n$ such that $w_i=w_{i+1}^{-1}$. If $w$ is a word that is \emph{not} irreducible, then by $w'$ we shall denote the unique irreducible word obtained from $w$ by deleting each occurence of the letter $e$ and each occurence of neighbouring letters $a$ and $a^{-1}$ (if this procedure leads to an empty word, then we set $w'$ to be $e$).

It is well-known and easy to observe that elements of $F(S)$, the free group of free generators coming from the set $S$ with $e$ as a unit, are in one-to-one correspondence with irreducible words over the alphabet $S\cup S^{-1}\cup\{e\}$.

Let $n\in \Nat$. By $W_n(S)$ we shall denote the set of all irreducible words of length at most $n$ over the alphabet $S\cup S^{-1}\cup\{e\}$.\\

Let now similarly $B$ be some non-empty set not containing the distinguished element $e$. The vector space over some field $F$ with $B$ as the maximal linearly independent set and $e$ representing zero can be viewed as a set of all functions from $B$ to $K$ that have finite support, where $f$ has finite support if for all but finitely many $b\in B$ we have $f(b)=0$.

In our case, we shall work with $F=\Rea$ (resp. $\Rat$), however since we shall need to work with only finitely many vectors at any given time we restrict to functions whose range is some specified finite subset of $\Rea$ (also, the set $B$ will be always finite).

Let $K\subseteq_\mathrm{fin} \Rea$ some finite subset of reals. Then by $V_K(B)$ we shall denote the set of all functions from $B$ to $K$ with finite support. The requirement on finite support will be in our construction superfluous since $B$ will be at any given time finite as already mentioned.

\section{The proof of Theorem \ref{main}}
We start by describing the underlying countable set $X$ mentioned above in the preliminaries.
\subsection{The underlying dense set}
We now describe a countably infinite set $X$, constructed as an increasing union of finite sets $X_0\subseteq X_1\subseteq \ldots$, which will also carry a multiplicative group operation and the corresponding group inverse operation so that $X$ will be isomorphic to a free group of countably many generators, and it will also carry an additive (abelian) group operation together with multiplication by scalars that are dyadic rationals so that it is a proper subgroup of an infinite dimensional vector space over the rationals. Moreover, the unit for addition and multiplication will be the same.

Later, we shall define a metric on $X$ that will be invariant under both addition and multiplication and will preserve scalar multiplication. The completion then will be a Banach space over the reals which is also equipped with multiplication with free group as a dense part.\\

Let $n\geq 1$ be arbitrary. By $D_n$ we shall denote the set of dyadic rationals $\frac{a}{2^n}$, where $a\in [-2^{2n},2^{2n}]$. Clearly, $D=\bigcup_n D_n$ is the set of all dyadic rational numbers.

We set $X_0=\{e\}$. Let $S_1=\{x\}$ be some singleton. We set $X_1=W_1(S_1)=\{e,x,x^{-1}\}$. Let $B_2=X_1\setminus X_0=\{x,x^{-1}\}$. We set $X_2=V_{D_1}(B_2)=\{\alpha x+\beta x^{-1}:\alpha,\beta\in D_1\}$.\\

Suppose now that $X_{2n}$ has been constructed. We need to construct $X_{2n+1}$ and $X_{2n+2}$. Set $S_{2n+1}$ to be $S_{2n-1}\cup (X_{2n}\setminus X_{2n-1})$. Then we set $X_{2n+1}$ to be $W_{2n+1}(S_{2n+1})$.

Next we set $B_{2n+2}$ to be $B_{2n}\cup (X_{2n+1}\setminus X_{2n})$. Then we set $X_{2n+2}$ to be $V_{D_{n+1}}(B_{2n+2})$.

This finishes the inductive construction. Note that $X=\bigcup_n X_n=\bigcup_n W_{2n+1}(S_{2n+1})=\bigcup_n V_{D_{n+1}}(B_{2n+2})$. It follows that if we set $S=\bigcup_n S_{2n+1}$ and $B=\bigcup_n B_{2n+2}$, then $X$ is also naturally isomorphic to $F(S)$, the free group of countably many generators coming from the set $S$, and it is a (additive) proper subgroup of the rational (or real) vector space with $B$ as the maximal linearly independent set - the minimal subgroup that contains free abelian group with $B$ as a set of generators that is closed under multiplication by scalars from $D$.\\

We define inductively a rank function $r:X\to \omega$. For any $x\in X$ we set $r(x)=0$ if neither $x$ nor $x^{-1}$ is possible to write as $\alpha_1 y_1+\ldots+\alpha_m y_m$, where $m>1$, $\alpha_i>0$, for every $i$, and $\alpha_1+\ldots+\alpha_m=1$.

If $x$ or $x^{-1}$ \emph{is} possible to write as $\alpha_1 y_1+\ldots+\alpha_m y_m$, where $m>1$, $\alpha_i>0$, for every $i$, and $\alpha_1+\ldots+\alpha_m=1$, then we set $r(x)=\max\{r(y_i):i\leq m\}+1$.\\

\subsection{Construction of the metric}
We shall now define a metric $\rho$ and a norm $\Norm$ on $X$. Actually, the metric and the norm will be one and the same in the sense that for any $x,y\in X$ we shall have $\rho(x,y)=\|x-y\|$ and $\|x\|=\rho(x,e)$. The distinguishing is done only for practical notational reasons since we understand $X$ as both a free group and a subgroup of a vector space. In the former case, it is more natural to consider a metric there, while in the latter to consider a norm there.
By induction, we shall define functions $\rho_n:X_n^2\rightarrow \Rea$ (the range will actually be a subset of non-negative rationals) for odd $n$ and functions $\Norm_n:X_n\rightarrow \Rea$ for even $n$ that satisfy the following properties:
\begin{enumerate}
\item\label{con1} for every odd $n$ we have that $\rho_n$ is a symmetric function that is equal to zero only on diagonal, i.e. $\rho_n(x,y)=0$ iff $x=y$ for $x,y\in X_n$; similarly, for every even $n$ we have that $\|x\|_n=0$ iff $x=e$ for $x\in X_n$,
\item\label{con2} for every even $n$, $\Norm_n$ extends $\rho_{n-1}$, i.e. for every $a,b\in X_{n-1}$ we have $$\|a-b\|_n=\rho_{n-1}(a,b),$$ similarly for every odd $n$, $\rho_n$ extends $\Norm_{n-1}$, i.e. for every $a,b\in X_{n-1}$ such that $a-b\in X_{n-1}$ we have $$\rho_n(a,b)=\|a-b\|_{n-1},$$
\item\label{con3} for every odd $n$, for any words (not necessarily irreducible) $w_1,w_2,v_1,v_2$ over the alphabet $S_n\cup S_n^{-1}\cup\{e\}$ such that\\ $w'_1,w'_2,v'_1,v'_2,(w_1w_2)',(v_1v_2)'\in X_n$, we have $$\rho_n((w_1w_2)',(v_1v_2)')\leq \rho_n(w'_1,v'_1)+\rho_n(w'_2,v'_2),$$ and for every $a,b\in X_n$ we have $$\rho_n(a,b)=\rho_n(a^{-1},b^{-1}),$$
\item\label{con4} for every odd $n$ and for every $a\in X_n$ and for every $b\in X_n$ such that $b=\alpha_1 c_1+\ldots+\alpha_m c_m$, where $m>1$, $\alpha_i\geq 0$ and $c_i\in B_{n-1}$, for every $i$, and $\alpha_1+\ldots+\alpha_m=1$, we have $$\rho_n(a,b)\leq \alpha_1\cdot\rho_n(a,c_1)+\ldots+\alpha_m\cdot \rho_n(a,c_m),$$
\item\label{con5} for every even $n$ and for every $a\in X_n$ and any scalar $\alpha$ such that $\alpha a\in X_n$ we have $$\|\alpha a\|_n=|\alpha|\cdot \|a\|_n,$$ and for every $a,b\in X_n$ such that also $a+b\in X_n$ we have $$\|a+b\|_n\leq \|a\|_n+\|b\|_n,$$
\item\label{con6} for every odd $n$, for every $a\in X_n$ and for every $b\in X_n$ such that $b=(\alpha_1 c_1+\ldots+\alpha_m c_m)^{-1}$, where, $m>1$, $\alpha_i\geq 0$ and $c_i\in B_{n-1}$, for every $i$, and $\alpha_1+\ldots+\alpha_m=1$, we have $$\rho_n(a,b)\leq \alpha_1\cdot\rho_n(a,c_1^{-1})+\ldots+\alpha_m\cdot \rho_n(a,c_m^{-1}),$$ and similarly, for every even $n$, for every $a\in X_n$ and for every $b\in X_{n-1}$ such that $b=(\alpha_1 c_1+\ldots+\alpha_m c_m)^{-1}$, where $m>1$, $\alpha_i\geq 0$ and $c_i\in B_{n-2}$, for every $i$, and $\alpha_1+\ldots+\alpha_m=1$, we have $$\|a-b\|_n\leq \alpha_1\cdot\|a-c_1^{-1}\|_n+\ldots+\alpha_m\cdot \|a-c_m^{-1}\|_n.$$\\

\end{enumerate}
We define $\rho_1$ on $X_1$ by setting $\rho_1(x,e)=\rho_1(x^{-1},e)=1$ and $\rho_1(x,x^{-1})=2$. Obviously, this satisfies all the requirements.

Let us now define $\Norm_2$ on $X_2$, i.e. we have to define $\|\alpha x+\beta x^{-1}\|_2$ for every $\alpha,\beta\in D_1$. We set $$\|\alpha x+\beta x^{-1}\|_2=\min\{|\gamma_1|\cdot \rho_1(x,e)+|\gamma_2|\cdot \rho_1(x^{-1},e)+|\gamma_3|\cdot \rho_1(x,x^{-1}):$$ $$\alpha x+\beta x^{-1}=\gamma_1 x+\gamma_2 x^{-1}+\gamma_3(x-x^{-1}),\gamma_1,\gamma_2,\gamma_3\in\Rea\}.$$

Condition \eqref{con6} is automatically satisfied as there is no $b\in X_1$ of the form $(\alpha_1 c_1+\ldots+\alpha_m c_m)^{-1}$ for appropriate $\alpha$'s and $c$'s. Condition \eqref{con1} is also obvious. Thus we need to check the conditions \eqref{con2} and \eqref{con5}. 

Let us do the former. We need to check that $\Norm_2$ extends $\rho_1$. We shall check that $\|x\|_2=\rho_1(x,e)$. The other cases are similar. Clearly, $\|x\|_2\leq \rho_1(x,e)$. Suppose that $\|x\|_2=|\gamma_1|\cdot \rho_1(x,e)+|\gamma_2|\cdot \rho_1(x^{-1},e)+|\gamma_3|\cdot \rho_1(x,x^{-1})$, where $x=\gamma_1 x+\gamma_2 x^{-1}+\gamma_3(x-x^{-1})$. It follows that necessarily $\gamma_2=\gamma_3$ and $\gamma_1+\gamma_3=1$. By triangle inequality we have $|\gamma_2|\cdot\rho_1(x^{-1},e)+|\gamma_2=\gamma_3|\cdot\rho_1(x,x^{-1})\geq |\gamma_2|\cdot \rho_1(x,e)$, thus $|\gamma_1|\cdot \rho_1(x,e)+|\gamma_2|\cdot \rho_1(x^{-1},e)+|\gamma_3|\cdot \rho_1(x,x^{-1})\geq |\gamma_1|\cdot \rho_1(x,e)+|\gamma_2|\cdot \rho(x,e)\geq |\gamma_1+\gamma_2|\cdot \rho_1(x,e)=\rho_1(x,e)$.

We now check \eqref{con5}. Fix some $a\in X_2$ and $\alpha$ such that $\alpha a\in X_2$. Suppose that $\|a\|=|\gamma_1|\cdot \rho_1(x,e)+|\gamma_2|\cdot \rho_1(x^{-1},e)+|\gamma_3|\cdot \rho_1(x,x^{-1})$, where we have $a=\gamma_1 x+\gamma_2 x^{-1}+\gamma_3(x-x^{-1})$. Then since $\alpha a=\alpha\cdot \gamma_1 x+\alpha\cdot\gamma_2 x^{-1}+\alpha\cdot\gamma_3(x-x^{-1})$ we have that $\|\alpha a\|_2\leq |\alpha\gamma_1|\cdot \rho_1(x,e)+|\alpha\gamma_2|\cdot \rho_1(x^{-1},e)+|\alpha\gamma_3|\cdot \rho_1(x,x^{-1})=|\alpha|\cdot \|a\|_2$. The other inequality is analogous. By a similar argument one can also show that for any $a,b\in X_2$ such that also $a+b\in X_2$ we have $\|a+b\|_2\leq \|a\|_2+\|b\|_2$ and we leave this to the reader. In fact, we note here that this definition of $\Norm_2$ is equivalent with that one that says that $\Norm_2$ is the greatest function that satisfies condition \eqref{con4} and such that $\|x\|_2\leq \rho_1(x,e)$, $\|x^{-1}\|_2\leq \rho_1(x^{-1},e)$ and $\|x-x^{-1}\|_2\leq \rho_1(x,x^{-1})$.\\

\noindent{\bf Extending the metric.}\\
Suppose we have defined $\Norm_n$ for some even $n\geq 2$. We now define $\rho_{n+1}$ on $X_{n+1}$. First we inductively define an auxiliary function $\delta$ on $X^2_{n+1}$. Fix a pair $x,y\in X_{n+1}$. If $r(x)=r(y)=0$, then we set $$\delta(x,y)=\min\{\|a_1-b_1\|_n+\ldots+\|a_m-b_m\|_n:$$ $$x=(a_1\ldots a_m)',y=(b_1\ldots b_m)',\forall i\leq m (a_i,b_i,a_i-b_i\in X_n)\}.$$ Note that the minimum is indeed attained as $X_n$ is finite. Note again that for any $z\in X_{n+1}$ if $r(z)>0$ we have that either $z$ or $z^{-1}$ belongs to $X_n$. So if $r(x)>0$, $r(y)>0$, then we set $$\delta(x,y)=\begin{cases}
\|x-y\|_n & \text{if }x,y\in X_n\\
\|x^{-1}-y^{-1}\|_n & \text{if }x^{-1},y^{-1}\in X_n\\
\min\{\|x-z\|+\|z^{-1}-y^{-1}\|:z,z^{-1}\in X_n\} & \text{if }x, y^{-1}\in X_n\\
\min\{\|x^{-1}-z^{-1}\|+\|z-y\|:z,z^{-1}\in X_n\} & \text{if }x^{-1},y\in X_n.\\
\end{cases}$$

Now we suppose that for one of the elements, say $x$, we have $r(x)=0$, and for the other one we have $r(y)>0$. The following is done by induction on $r(y)$. First suppose that $y=\alpha_1 z_1+\ldots+\alpha_m z_m$, where $m>1$, $\alpha_i>0$, for all $i$, and $\alpha_1+\ldots+\alpha_m=1$. Then we set $$\delta(x,y)=\alpha_1\cdot\delta(x,z_1)+\ldots+\alpha_m\cdot\delta(x,z_m).$$ Similarly, if $y=(\alpha_1 z_1+\ldots+\alpha_m z_m)^{-1}$, where $m>1$, $\alpha_i>0$, for all $i$, and $\alpha_1+\ldots+\alpha_m=1$, then we set $$\delta(x,y)=\alpha_1\cdot\delta(x,z^{-1}_1)+\ldots+\alpha_m\cdot\delta(x,z^{-1}_m).$$

We are now ready to define $\rho_{n+1}$. Thus fix now again a pair $x,y\in X_{n+1}$ and set $$\rho_{n+1}(x,y)=\min\{\delta(a_1,b_1)+\ldots+\delta(a_m,b_m):$$ $$x=(a_1\ldots a_m)',y=(b_1\ldots b_m)',a_1,\ldots,a_m,b_1,\ldots,b_m\in X_{n+1}\}.$$

First notice that since $X_{n+1}$ is finite the minimum is attained, thus in particular we have for $x\neq y$ that $\rho_{n+1}(x,y)>0$. Since $\rho_{n+1}$ is clearly symmetric we get it satisfies the condition \eqref{con1}.

We claim that $\rho_{n+1}$ is the greatest function satisfying:\\
\begin{enumerate}[(a)]
\item\label{mcon_a} $\rho_{n+1}(x,y)\leq \|x-y\|_n$ for every $x,y\in X_n$ such that $x-y\in X_n$,
\item\label{mcon_b} $\rho_{n+1}(x,y)=\rho_{n+1}(x^{-1},y^{-1})$ for every $x,y\in X_{n+1}$,
\item\label{mcon_c} $\rho_{n+1}(ab,cd)\leq \rho_{n+1}(a,c)+\rho_{n+1}(b,d)$ for every $a,b,c,d\in X_{n+1}$ such that $ab,cd\in X_{n+1}$,
\item\label{mcon_d} $\rho_{n+1}(x,(\alpha_1 z_1+\ldots+\alpha_m z_m))\leq \alpha_1\cdot\rho_{n+1}(x,z_1)+\ldots+\alpha_m\cdot \rho_{n+1}(x,z_m)$, where $m>1$, $\alpha_i>0$, for all $i$, $\alpha_1+\ldots+\alpha_m=1$ and $\alpha_1 z_1+\ldots+\alpha_m z_m\in X_{n+1}$,
\item\label{mcon_e} $\rho_{n+1}(x,(\alpha_1 z_1+\ldots+\alpha_m z_m)^{-1})\leq \alpha_1\cdot\rho_{n+1}(x,z^{-1}_1)+\ldots+\alpha_m\cdot \rho_{n+1}(x,z^{-1}_m)$, where $m>1$, $\alpha_i>0$, for all $i$, $\alpha_1+\ldots+\alpha_m=1$ and $(\alpha_1 z_1+\ldots+\alpha_m z_m)^{-1}\in X_{n+1}$.\\

\end{enumerate}
First of all, it is clear from the definitions of $\rho_{n+1}$ (and of $\delta$) that $\rho_{n+1}$ satisfies all these conditions. Thus in particular, we get that $\rho_{n+1}$ satisfies the conditions \eqref{con3},\eqref{con4} and \eqref{con6}. Next, if $\xi$ is any other function satisfying conditions \eqref{mcon_a}-\eqref{mcon_e}, then it is readily checked that $\xi\leq \delta$ and because of \eqref{mcon_c} also $\xi\leq \rho_{n+1}$.

We shall now conclude from that that $\rho_{n+1}$ also satisfies \eqref{con2}. Indeed, let $X'_n\subseteq X_n$ be such that for every $x,y\in X'_n$ we have $x-y\in X_n$. Then consider the metric $\xi$ on $X'_n$ defined as $\xi(x,y)=\|x-y\|_n$. We claim it satisfies the conditions \eqref{mcon_a}-\eqref{mcon_e} above. Condition \eqref{mcon_a} is satisfied since $\xi(x,y)=\|x-y\|_n$ for appropriate $x,y$. Take some $x,y\in X'_n$ such that $x^{-1},y^{-1}\in X'_n$. Necessarily $x,y,x^{-1},y^{-1}\in X_{n-1}$ and since $\Norm_n$ extends $\rho_{n-1}$ we get $\xi(x,y)=\rho_{n-1}(x,y)=\rho_{n-1}(x^{-1},y^{-1})=\xi(x^{-1},y^{-1})$. Take now some $a,b,c,d\in X'_n$ such that $ab,cd\in X'_n$. We again necessarily have that $a,b,c,d,ab,cd\in X_{n-1}$ and since $\Norm_n$ extends $\rho_{n-1}$ we again obtain $\xi(ab,cd)=\rho_{n-1}(ab,cd)\leq \rho_{n-1}(a,c)+\rho_{n-1}(b,d)=\xi(a,c)+\xi(b,d)$. We have verified conditions \eqref{mcon_b} and \eqref{mcon_c}. Condition \eqref{mcon_d} follows since $\Norm_n$ satisfies the condition \eqref{con5} further above. Finally, take some $x,(\alpha_1 z_1+\ldots+\alpha_m z_m)^{-1}\in X'_n$, where $m>1$, $\alpha_i>0$, for all $i$, $\alpha_1+\ldots+\alpha_m=1$. We have $\xi(x,(\alpha_1 z_1+\ldots+\alpha_m z_m)^{-1})=\|x-(\alpha_1 z_1+\ldots+\alpha_m z_m)^{-1}\|_n\leq \alpha_1\cdot\|x-z_1^{-1}\|_n+\ldots+\alpha_m\cdot\|x-z_m\|_n=\alpha_1\cdot\xi(x,z_1)+\ldots+\alpha_m\cdot\xi(x,z_m)$, where the middle inequality follows from the property \eqref{con6} above.

We thus get $\|x-y\|_n=\xi(x,y)\leq \rho_{n+1}(x,y)$ for every $x,y\in X_n$ such that $x-y\in X_n$. Since by assumption $\rho_{n+1}(x,y)\leq \|x-y\|_n$ we get $\rho_{n+1}(x,y)=\|x-y\|_n$ and we are done.\\

\noindent{\bf Extending the norm.}\\
Now suppose we have defined $\rho_n$ on $X_n$ for some odd $n>2$. We define $\Norm_{n+1}$ on $X_{n+1}$. We again at first define, inductively, an auxiliary function $\gamma :X_{n+1}\rightarrow \Rea$. First, for every $x,y\in X_n$ we set $$\gamma(x-y)=\rho_n(x,y).$$ Next, for every $x,y\in X_{n+1}$ such that $x-y\in X_{n+1}$ and $x\in X_{n+1}\setminus X_n$ we define $\gamma(x-y)$ by induction on $r(y)$. If $r(y)=0$ then we set $$\gamma(x-y)=\min\{|\beta_1|\cdot\gamma(v_1)+\ldots+|\beta_i|\cdot\gamma(v_i):$$ $$x-y=\beta_1 v_1+\ldots+\beta_i v_i,\forall j\leq i \exists a_j,b_j\in X_n(v_j=a_j-b_j)\}.$$

If $r(y)>0$ and $y=(\alpha_1 z_1+\ldots+\alpha_m z_m)^{-1}$, where $m>1$, $\alpha_i>0$, for every $i$, and $\alpha_1+\ldots+\alpha_m=1$, then we set $$\gamma(x-y)=\alpha_1\cdot\gamma(x-z^{-1}_1)+\ldots+\alpha_m\cdot\gamma(x-z^{-1}_m).$$

Finally, for any $x\in X_{n+1}$ we set $$\|x\|_{n+1}=\min\{\gamma(y_1)+\ldots+\gamma(y_i):x=y_1+\ldots+y_i,y_1,\ldots,y_i\in X_{n+1}\}.$$

First thing to observe is again that for any $x\in X_{n+1}$ we have $\|x\|_{n+1}=0$ iff $x=e$. It follows that condition $(1)$ is satisfied for $\Norm_{n+1}$.

Next we claim that $\Norm_{n+1}$ is the greatest function satisfying:
\begin{enumerate}[(a)]
\item\label{ncon_a} $\|x-y\|_{n+1}\leq \rho_n(x,y)$ for every $x,y\in X_n$,
\item\label{ncon_b} $\|\alpha x+\beta y\|_{n+1}\leq \|\alpha x\|_{n+1}+\|\beta y\|_{n+1}= |\alpha|\cdot \|x\|_{n+1}+|\beta|\cdot\|y\|_{n+1}$ for $x,y\in X_{n+1}$ such that also $\alpha x+\beta y\in X_{n+1}$,
\item\label{ncon_c} $\|x-(\alpha_1 z_1+\ldots+\alpha_m z_m)^{-1}\|_{n+1}\leq \alpha_1\cdot\|x-z^{-1}_1\|_{n+1}+\ldots+\alpha_m\cdot\|x-z^{-1}_m\|_{n+1}$, where $m>1$, $\alpha_i>0$, for every $i$, and $\alpha_1+\ldots+\alpha_m=1$.

\end{enumerate}

First, it follows from the definitions of $\gamma$ and $\Norm_{n+1}$ that $\Norm_{n+1}$ satisfies all these conditions, thus we have that it satisfies the conditions \eqref{con5} and \eqref{con6} further above. If $\xi$ is any other functions satisfying conditions \eqref{ncon_a}-\eqref{ncon_c} then it again follows that necessarily $\xi\leq \gamma$ and thus also $\xi\leq\Norm_{n+1}$.

We are ready to verify the remaining condition \eqref{con2} for $\Norm_{n+1}$ that it extends $\rho_n$. Let $X'_{n+1}\subseteq X_{n+1}$ be the set $\{x-y:x,y\in X_n\}$. Define $\xi$ on $X'_{n+1}$ as follows: $\xi(x-y)=\rho_n(x,y)$ for every $x-y\in X'_{n+1}$. Then $\xi$ satisfies \eqref{ncon_a} since $\xi(x-y)=\rho_n(x,y)$ for appropriate $x,y$. Next we check \eqref{ncon_b}. Take some $\alpha x+\beta y$. If $\alpha=1$ and $\beta=-1$ (or vice versa), then we have $\xi(x-y)=\rho_n(x,y)\leq \rho_n(x,e)+\rho_n(e,y)=\xi(x)+\xi(y)$, where the middle inequality follows from the condition \eqref{con3} that $\rho_n$ satisfies (which implies the triangle inequality). Finally, for $(\alpha_1 z_1+\ldots+\alpha_m z_m)^{-1}$, where $m>1$, $\alpha_i>0$, for every $i$, and $\alpha_1+\ldots+\alpha_m=1$, we have $\xi(x-(\alpha_1 z_1+\ldots+\alpha_m z_m)^{-1})=\rho_n(x,(\alpha_1 z_1+\ldots+\alpha_m z_m)^{-1})\leq \alpha_1\cdot\rho_n(x,z^{-1}_1)+\ldots+\alpha_m\cdot\rho_n(x,z^{-1}_m)= \alpha_1\cdot\xi(x-z^{-1}_1)+\ldots+\alpha_m\cdot\xi(x-z^{-1}_m)$, since $\rho_n$ satisfies the condition \eqref{con6}, and we are done.\\

Now we can define $\Norm$ on $X$ by putting $$\Norm=\bigcup_i \Norm_i$$ and analogously we can define $\rho$ on $X$ by putting $$\rho=\bigcup_i \rho_i.$$

By the condition \eqref{con2} we have that for any $x,y\in X$ we have $\|x-y\|=\rho(x,y)$. By \eqref{con1} we have that for $x\neq y\in X$ we have $\rho(x,y)>0$ and equivalently, for any $x\neq e\in X$ we have $\|x\|>0$.

Let us check that $\rho$ is a bi-invariant metric on $X$ when considered as a (free) group. By \eqref{con1} is symmetric. We use the following simple fact.
\begin{fact}
Let $G$ be a group equipped with a symmetric function $d:G^2\rightarrow \Rea^+_0$ that is equal to $0$ only on diagonal. Then $d$ is a bi-invariant metric if and only if for every $x,y,v,w\in G$ we have $d(x\cdot y,v\cdot w)\leq d(x,v)+d(y,w)$.
\end{fact}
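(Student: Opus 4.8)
The plan is to prove the two implications separately. The forward direction is essentially immediate, while the reverse direction is the substantive one: I must extract left-invariance, right-invariance, and the triangle inequality from the single product inequality $d(x\cdot y,v\cdot w)\le d(x,v)+d(y,w)$, using only that $d$ is symmetric and vanishes exactly on the diagonal.

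For the forward direction, I would assume $d$ is a bi-invariant metric and, given $x,y,v,w\in G$, insert the intermediate point $v\cdot y$ and apply the triangle inequality: $d(x\cdot y,v\cdot w)\le d(x\cdot y,v\cdot y)+d(v\cdot y,v\cdot w)$. Right-invariance collapses the first term to $d(x,v)$ and left-invariance collapses the second to $d(y,w)$, which is exactly the product inequality.

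The reverse direction is where the work lies. First I would derive \emph{left-invariance}: specializing the product inequality gives $d(a\cdot x,a\cdot y)\le d(a,a)+d(x,y)=d(x,y)$, since $d(a,a)=0$; replacing $a$ by $a^{-1}$, $x$ by $a\cdot x$, $y$ by $a\cdot y$ yields the reverse bound $d(x,y)\le d(a\cdot x,a\cdot y)$, so $d(a\cdot x,a\cdot y)=d(x,y)$. The symmetric computation $d(x\cdot a,y\cdot a)\le d(x,y)$ together with the same $a^{-1}$ trick gives \emph{right-invariance}. With both invariances established, the triangle inequality follows from one well-chosen substitution: taking $a=x$, $b=e$, $c=y$, $d=y^{-1}z$ in the product inequality gives $d(x,z)=d(x\cdot e,\,y\cdot(y^{-1}z))\le d(x,y)+d(e,y^{-1}z)$, and left-invariance rewrites $d(e,y^{-1}z)=d(y\cdot e,\,y\cdot(y^{-1}z))=d(y,z)$, so $d(x,z)\le d(x,y)+d(y,z)$. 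Since symmetry and positivity are given by hypothesis, this shows $d$ is a bi-invariant metric.

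The main (and essentially only) obstacle is spotting the substitution $(a,b,c,d)=(x,e,y,y^{-1}z)$ that manufactures the triangle inequality out of the product inequality; once invariance is available this is routine, and the order of the steps matters only in that invariance must be proved before the triangle inequality so that the auxiliary term $d(e,y^{-1}z)$ can be identified with $d(y,z)$.
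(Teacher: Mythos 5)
Your proof is correct and rests on the same two devices as the paper's own argument: collapsing terms via $d(a,a)=0$, and the $a^{-1}$-bootstrap (applying the inequality again with $a^{-1}$) to upgrade the invariance inequalities $d(a\cdot x,a\cdot y)\leq d(x,y)$ to equalities. The only real difference is the order of steps—the paper obtains the triangle inequality first and independently of invariance, via the three-fold decomposition $d(a,c)=d(a\cdot b^{-1}\cdot b,\,b\cdot b^{-1}\cdot c)\leq d(a,b)+d(b^{-1},b^{-1})+d(b,c)$, and proves invariance afterwards, whereas you prove invariance first and then use it to identify $d(e,y^{-1}z)$ with $d(y,z)$; both orderings are valid.
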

\begin{proof}
If $d$ is a bi-invariant metric then the inequality readily follows from bi-invariance and using triangle inequality.

So suppose that $d$ satisfies such an inequality for every $x,y,v,w\in G$. Fix $a,b,c\in G$. Then $d(a,c)=d(a\cdot b^{-1}\cdot b,b\cdot b^{-1}\cdot c)\leq d(a,b)+d(b^{-1},b^{-1})+d(b,c)=d(a,b)+d(b,c)$, so $d$ is a metric. Now $d(a\cdot b,a\cdot c)\leq d(a,a)+d(b,c)=d(b,c)=d(a^{-1}\cdot a\cdot b,a^{-1}\cdot a\cdot c)\leq d(a^{-1},a^{-1})+d(a\cdot b,a\cdot c)=d(a\cdot b,a\cdot c)$ which shows the left-invariance. The right invariance is done analogously.
\end{proof}
However, $\rho$ does satisfy the condition from the statement of the fact since it satisfies the condition \eqref{con3}.\\

Similarly, for every $x,y$ and $\alpha,\beta\in D$ (recall that $D$ denotes the dyadic rationals) we have $\|\alpha x+\beta y\|\leq \|\alpha x\|+\|\beta y\|= |\alpha|\cdot\|x\|+|\beta|\cdot\|y\|$ since $\Norm$ satisfies the condition $(5)$.\\

Denote now by $\mathbb{X}$ the completion of $X$ with respect to $\rho$, or equivalently, with respect to $\Norm$. Both the multiplicative and additive group operations extend to the completion as well as the scalar multiplication by dyadic rationals. Moreover, since the dyadic rationals are dense in $\Rea$, $\mathbb{X}$ has well-defined scalar multiplication by all the reals. Thus $\mathbb{X}$ is a Banach space.

\section{Concluding discussion and questions}
Let us comment on similarities between the constructions of the presented Banach group and Lipschitz-free Banach spaces. For any pointed metric space $(X,0)$ there is a Banach space $LF(X)$, called Lipschitz-free space (or Arens-Eells space) over $(X,0)$, that has elements of $X\setminus\{0\}$ as the Hamel basis and the point $0$ as the Banach space zero. $LF(X)$ contains $X$ isometrically and the norm of $LF(X)$ is uniquely described as the largest norm on the linear span of $X$ that extends the metric on $X$.

By similar means, one can define the largest bi-invariant metric on $F_{X\setminus\{0\}}$, the free group having $0$ as a neutral element and $X\setminus\{0\}$ as the set of free generators, that extends the metric on $X$.

The following generalization of the Lipschitz-free construction was given in \cite{BY}. If $Y$ is a Banach space and $Y\subseteq X$ is a metric extension of $Y$ such that for every $x\in X$ the distance function $\mathrm{dist}(x,\cdot):Y\rightarrow \Rea$ is convex, then there is a free space $LF_Y(X)$ which has $Y$ as a subspace, contains $X$ as a subset so that the elements of $X\setminus Y$ are linearly independent, and the norm on $LF_Y(X)$ is the largest norm on such a vector space that extends the metric on $X$.

An analogous generalization of the Graev metric on groups was given in \cite{Do}; i.e. for any group $G$ with bi-invariant metric and any metric extension $G\subseteq X$ such that $G$ is closed in $X$ there exists the largest bi-invariant metric on the free product $G\ast F_{X\setminus G}$.

Roughly speaking, the idea behind our construction from the present paper was to alternatively use those two constructions. That means, to start with some pointed metric space $(X,0)$. Then consider the Lipschitz-free space $LF(X)$, then the free group $F_{LF(X)}$ with the Graev metric, then $LF_{LF(X)}(F_{LF(X)})$, etc., and at the end to take the completion. The reason why this approach does not literally work is that for a general point $x$ in $F_{LF(X)}$, the distance function $\mathrm{dist}(x,\cdot):LF(X)\rightarrow\Rea$ is not convex. The remedy was to use `finitary' versions of the constructions above where we were able to guarantee the convexity of the appropriate distance functions.
\subsection{Freeness of the Banach group}
We shall argue that although $\mathbb{X}$ as a Banach space is likely not one of the ``well-behaved" spaces it is not completely random either. It can be uniquely characterized as a free one-generated uniform Banach group whose metric induced by the norm is bi-invariant. That can be described via certain universal property. Let us at first define morphisms in the category of uniform Banach groups. The natural definition seems to be a bounded linear operator that is moreover a group homomorphism in the additional group structure. Note that it is then automatically uniformly continuous group homomorphism.

We claim that:
\begin{thm}
$\mathbb{X}$ is the free uniform Banach group over one generator whose metric induced by the norm is bi-invariant. That means, for any uniform Banach group $\mathbb{Y}$ whose metric induced by the norm is bi-invariant, and any element $y\in \mathbb{Y}$ there exists a unique uniform Banach group morphism $\phi:\mathbb{X}\rightarrow \mathbb{Y}$ such that $\phi(x)=y$ and $\|\phi\|=\|y\|_{\mathbb{Y}}$.

This property characterizes $\mathbb{X}$ uniquely up to isomorphism.
\end{thm}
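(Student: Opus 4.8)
The plan is to construct the required morphism $\phi$ directly on the dense countable set $X$, following the very recursion by which $X$ was built, and then to extract the identity $\|\phi\|=\|y\|_{\mathbb{Y}}$ from the maximality characterisations of $\rho_{n+1}$ and $\Norm_{n+1}$ together with the bi-invariance of the metric on $\mathbb{Y}$. Uniqueness is the easy half and simultaneously dictates the definition: since a morphism is by definition $\Rea$-linear and a homomorphism for the multiplicative structure, its value at every point of $X$ is forced once $\phi(x)=y$ is fixed. Linearity determines $\phi$ on each combination $\sum_i\alpha_i b_i\in V_{D_{n+1}}(B_{2n+2})$, while the homomorphism property together with $\phi(u^{-1})=\phi(u)^{-1}$ determines it on each reduced word of $W_{2n+1}(S_{2n+1})$, and these two prescriptions alternate exactly along the even and odd stages of the construction. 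As $X$ is dense in $\mathbb{X}$ and every morphism is continuous, two morphisms agreeing at $x$ agree on all of $\mathbb{X}$.

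For existence I would take these forced values as a \emph{definition}: put $\phi(x)=y$, extend multiplicatively across each odd stage using freeness of the group structure, and extend by $\Rea$-linearity over the basis $B_{2n+2}$ across each even stage. The only thing requiring care is that the two prescriptions never clash, and this is exactly what the ``no connection'' feature of the construction guarantees: a genuinely new dyadic combination $\alpha u+\beta v$ is a fresh free generator, and a genuinely new reduced word is linearly independent from everything produced before it, so the linear extension is well defined on the uniquely-represented vectors of each $V_{D_{n+1}}(B_{2n+2})$ and is automatically compatible on overlaps with the homomorphism already defined. This yields a map $\phi\colon X\to\mathbb{Y}$ that is at once dyadically linear and multiplicative.

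The substance of the proof is the bound $\|\phi(z)\|_{\mathbb{Y}}\le\|y\|_{\mathbb{Y}}\cdot\|z\|$ for $z\in X$, which I would prove by induction on the stage. The point is that the pulled-back functions $\tilde\rho(a,b):=\|\phi(a)-\phi(b)\|_{\mathbb{Y}}/\|y\|_{\mathbb{Y}}$ and $\widetilde{\|z\|}:=\|\phi(z)\|_{\mathbb{Y}}/\|y\|_{\mathbb{Y}}$ satisfy at each level precisely the conditions \eqref{mcon_a}--\eqref{mcon_e} (respectively \eqref{ncon_a}--\eqref{ncon_c}) for which $\rho_{n+1}$ (respectively $\Norm_{n+1}$) was shown to be the \emph{greatest} solution; maximality then forces $\tilde\rho\le\rho_{n+1}$ and $\widetilde{\|\cdot\|}\le\Norm_{n+1}$, the base case being the elementary check $\tilde\rho\le\rho_1$ (which uses $\|y^{-1}\|_{\mathbb{Y}}=\|y\|_{\mathbb{Y}}$, itself a consequence of bi-invariance). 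Most conditions transfer immediately: linearity of $\phi$ handles the vector-space inequalities, the Fact on bi-invariant metrics gives $\|\phi(a)\phi(b)-\phi(c)\phi(d)\|_{\mathbb{Y}}\le\|\phi(a)-\phi(c)\|_{\mathbb{Y}}+\|\phi(b)-\phi(d)\|_{\mathbb{Y}}$, and the inverse map being an isometry gives $\tilde\rho(a,b)=\tilde\rho(a^{-1},b^{-1})$. The one genuinely delicate point, which I expect to be the main obstacle, is the convexity condition involving group inverses, \eqref{mcon_e} and \eqref{ncon_c}, which demands
$$\Big\|\phi(a)-\big(\textstyle\sum_i\alpha_i\phi(z_i)\big)^{-1}\Big\|_{\mathbb{Y}}\le\sum_i\alpha_i\big\|\phi(a)-\phi(z_i)^{-1}\big\|_{\mathbb{Y}},\qquad \sum_i\alpha_i=1,\ \alpha_i>0.$$
Here bi-invariance of the metric on $\mathbb{Y}$ is exactly what rescues the argument: two applications of invariance, a left translation followed by the inverse isometry, give $\|p-q^{-1}\|_{\mathbb{Y}}=\|q-p^{-1}\|_{\mathbb{Y}}$ for all $p,q\in\mathbb{Y}$, so with $p=\phi(a)$ and $q=\sum_i\alpha_i\phi(z_i)$ the left-hand side becomes $\big\|\sum_i\alpha_i(\phi(z_i)-p^{-1})\big\|_{\mathbb{Y}}$, which ordinary norm convexity bounds by $\sum_i\alpha_i\|\phi(z_i)-p^{-1}\|_{\mathbb{Y}}=\sum_i\alpha_i\|p-\phi(z_i)^{-1}\|_{\mathbb{Y}}$. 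In effect the inverse map linearises the offending expression, and this is precisely the convexity that the finitary construction of $\rho$ and $\Norm$ was engineered to supply internally. Granting the estimate, $\|\phi\|\le\|y\|_{\mathbb{Y}}$, while $\phi(x)=y$ and $\|x\|=1$ give the reverse inequality, so $\|\phi\|=\|y\|_{\mathbb{Y}}$; extending $\phi$ to $\mathbb{X}$ by uniform continuity, and to real scalars by density of $D$ in $\Rea$, finishes the construction of the morphism.

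Finally, uniqueness of $\mathbb{X}$ up to isomorphism is the usual consequence of a universal property. If $\mathbb{X}'$ with distinguished generator $x'$, $\|x'\|=1$, satisfies the same property, then applying the universal property of $\mathbb{X}$ to $(\mathbb{X}',x')$ and that of $\mathbb{X}'$ to $(\mathbb{X},x)$ produces morphisms $\phi\colon\mathbb{X}\to\mathbb{X}'$ and $\psi\colon\mathbb{X}'\to\mathbb{X}$ with $\phi(x)=x'$, $\psi(x')=x$ and $\|\phi\|=\|\psi\|=1$. Then $\psi\circ\phi$ fixes $x$ and has operator norm $1$ (it is $\le\|\psi\|\,\|\phi\|=1$ and $\ge 1$ because it fixes $x$), so by the uniqueness clause applied to the target $\mathbb{X}$ it must equal $\mathrm{id}_{\mathbb{X}}$; symmetrically $\phi\circ\psi=\mathrm{id}_{\mathbb{X}'}$. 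Hence $\phi$ is an isometric isomorphism of uniform Banach groups, and $\mathbb{X}$ is determined up to isomorphism.
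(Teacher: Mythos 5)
Your proposal is correct and follows essentially the same route as the paper: the morphism is forced on the dense set $X$ by linearity and multiplicativity, the bound $\|\phi(z)\|_{\mathbb{Y}}\le\|y\|_{\mathbb{Y}}\|z\|$ is obtained by induction from the maximality characterisations of $\rho_{n+1}$ and $\Norm_{n+1}$ applied to the pulled-back pseudometric and pseudonorm, and the characterisation of $\mathbb{X}$ is the standard universal-property argument. In fact you supply two details the paper only gestures at, namely the bi-invariance identity $\|p-q^{-1}\|_{\mathbb{Y}}=\|q-p^{-1}\|_{\mathbb{Y}}$ needed for the inverse-convexity condition (which the paper dismisses as ``analogous'') and the explicit composition argument for uniqueness up to isomorphism.
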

\begin{remark}
Note that every uniform Banach group $Z$ admits a bi-invariant metric that is uniformly continuous with respect to norm. For any $y,z\in Z$ set $D(y,z)=\sup_{v,w\in Z} \|v\cdot y\cdot w-v\cdot z\cdot w\|_Z$. It is always satisfied that $D(y,z)\geq \|y-z\|$ and the inequality may be strict in general.
\end{remark}
\begin{proof}
If $y=0$ then $\phi$ is the zero morphism and there is nothing to prove. So we assume that $y\neq 0$.

Let us take a closer look on the countable dense set $X\subseteq \mathbb{X}$. One can see that each element of $X$ is obtained in a unique way from the element $x$ (the only element of $S_1$) using operations of addition $+$, multiplication $\cdot$, additive and multiplicative inverses $-,{}^{-1}$ and by scalar multiplication by dyadic rationals $D$. It follows that there is a unique map $\phi':X\rightarrow \mathbb{Y}$ that preserves those operations and satisfies $\phi'(x)=y$. We need to show that for every $z\in X$ we have
\begin{equation}\label{max_norm}
\|\phi'(z)\|_{\mathbb{Y}}\leq \|y\|_{\mathbb{Y}}\|z\|_{\mathbb{X}}.
\end{equation}
Once this is proved we can (uniquely) extend $\phi'$ to the completion of $X$ to obtain $\phi\supseteq \phi':\mathbb{X}\rightarrow \mathbb{Y}$. $\phi$ is still linear and preserves the additional group structure. By \ref{max_norm}, we get that $\|\phi\|\leq \|y\|_{\mathbb{Y}}$. On the other hand, $\phi(x)=y$, thus $\|\phi\|=\|y\|_{\mathbb{Y}}$.\\

To simplify the notation, we shall without loss of generality assume that $\|y\|_{\mathbb{Y}}=1$.

We consider the following pseudometric $\sigma$, resp. pseudonorm $\||\cdot\||$, on $X$. For any $y,z\in X$ we set $\sigma(y,z)=\|\phi'(y)-\phi'(z)\|_{\mathbb{Y}}$; similarly, for every $y\in X$ we set $\||y\||=\|\phi'(y)\|_{\mathbb{Y}}$. For every odd $n$ we denote by $\sigma_n$ the restriction of $\sigma$ to $X_n$. Similarly, for every even $n$ we denote by $\||\cdot\||_n$ the restriction of $\||\cdot\||$ to $X_n$. It suffices to show, by induction, that for every odd $n$ we have $\sigma_n\leq \rho_n$ and for every even $n$ we have $\||\cdot\||_n\leq \Norm _n$.

Consider the case $n=1$. We have (from bi-invariance) $\sigma_1(x,e)=\sigma_1(x^{-1},e)=\rho_1(x,e)=\rho_1(x^{-1},e)=1$. It follows from the triangle inequality that $\sigma_1(x,x^{-1})\leq 2=\rho_1(x,x^{-1})$.

Consider now the case $n=2$. Take any $\alpha,\beta\in D_1$. We have $\|\alpha x+\beta x^{-1}\|_2=\min\{|\gamma_1|\cdot \rho_1(x,e)+|\gamma_2|\cdot \rho_1(x^{-1},e)+|\gamma_3|\cdot \rho_1(x,x^{-1}):\alpha x+\beta x^{-1}=\gamma_1 x+\gamma_2 x^{-1}+\gamma_3(x-x^{-1}),\gamma_1,\gamma_2,\gamma_3\in\Rea\}$. However, for any $\gamma_1,\gamma_2,\gamma_3\in\Rea$ if $\alpha x+\beta x^{-1}=\gamma_1 x+\gamma_2 x^{-1}+\gamma_3(x-x^{-1})$, then by the subadditivity and homogeneity of the norm we must have $$\||\alpha x+\beta x^{-1}\||_2=\leq |\gamma_1|\cdot \sigma_1(x,e)+|\gamma_2|\cdot \sigma_1(x^{-1},e)+|\gamma_3|\cdot \sigma_1(x,x^{-1})\leq$$ $$|\gamma_1|\cdot \rho_1(x,e)+|\gamma_2|\cdot \rho_1(x^{-1},e)+|\gamma_3|\cdot \rho_1(x,x^{-1}).$$
Thus $\||\cdot\||_2\leq \Norm_2$.

Consider now some general odd $n$. Necessarily, $\sigma_{n+1}$ satisfies all the following inequalities (recall the analogous inequalities for $\rho_{n+1}$)
\begin{enumerate}[(a)]
\item\label{smcon_a} $\sigma_{n+1}(x,y)\leq \||x-y\||_n$ for every $x,y\in X_n$ such that $x-y\in X_n$,
\item\label{smcon_b} $\sigma_{n+1}(x,y)=\sigma_{n+1}(x^{-1},y^{-1})$ for every $x,y\in X_{n+1}$,
\item\label{smcon_c} $\sigma_{n+1}(ab,cd)\leq \sigma_{n+1}(a,c)+\sigma_{n+1}(b,d)$ for every $a,b,c,d\in X_{n+1}$ such that $ab,cd\in X_{n+1}$,
\item\label{smcon_d} $\sigma_{n+1}(x,(\alpha_1 z_1+\ldots+\alpha_m z_m))\leq \alpha_1\cdot\sigma_{n+1}(x,z_1)+\ldots+\alpha_m\cdot \sigma_{n+1}(x,z_m)$, where $m>1$, $\alpha_i>0$, for all $i$, $\alpha_1+\ldots+\alpha_m=1$ and $\alpha_1 z_1+\ldots+\alpha_m z_m\in X_{n+1}$,
\item\label{smcon_e} $\sigma_{n+1}(x,(\alpha_1 z_1+\ldots+\alpha_m z_m)^{-1})\leq \alpha_1\cdot\sigma_{n+1}(x,z^{-1}_1)+\ldots+\alpha_m\cdot \rho_{n+1}(x,z^{-1}_m)$, where $m>1$, $\alpha_i>0$, for all $i$, $\alpha_1+\ldots+\alpha_m=1$ and $(\alpha_1 z_1+\ldots+\alpha_m z_m)^{-1}\in X_{n+1}$.\\

\end{enumerate}
Inequalities \eqref{smcon_a},\eqref{smcon_b} and \eqref{smcon_c} are clear; \eqref{smcon_d} follows since $\sigma_{n+1}(x,(\alpha_1 z_1+\ldots+\alpha_m z_m))=\|\phi'(x-(\alpha_1 z_1+\ldots+\alpha_m z_m))\|_{\mathbb{Y}}\leq \alpha_1\|\phi'(x-z_1)\|_{\mathbb{Y}}+\ldots+\alpha_m\|\phi'(x-z_m)\|_{\mathbb{Y}}=\alpha_1\cdot\sigma_{n+1}(x,z_1)+\ldots+\alpha_m\cdot \sigma_{n+1}(x,z_m)$; \eqref{smcon_e} follows analogously.

Since $\rho_{n+1}$ has been shown to be the greatest function satisfying the analogous conditions \eqref{mcon_a}-\eqref{mcon_e}, it follows from the induction hypothesis that $\sigma_{n+1}\leq \rho_{n+1}$.

Finally, for a general even $n$, a completely analogous argumentation, using the fact that $\Norm_{n+1}$ was the greatest function satisfying certain conditions, gives that $\||\cdot\||_{n+1}\leq \Norm_{n+1}$.\\

The uniqueness is a standard argument using the universality property.
\end{proof}

As mentioned above, one can rightfully suspect that $\mathbb{X}$ is not one of the well-behaved Banach spaces. Actually, in our opinion, it would not be difficult to enhance the construction above so that $\mathbb{X}$ was isometric to the Gurarij space (\cite{Gu}). It thus seems natural to ask whether one can get a non-commutative Banach group modeled on a `reasonable' space.

We mentioned in the preliminary section that the Heisenberg group $UT_3^3(\mathbb{R})$, modeled on a three-dimensional Banach space, is not a uniform Banach group, although it is very closed to it (this is also the content of Remark 5.2. in \cite{PraWes}). The following question thus arises.
\begin{question}
Does there exist a finite-dimensional non-commutative uniform Banach group?
\end{question}
We expect the answer to the previous question to be negative. However, one can then ask:
\begin{question}
Does there exist a non-commutative uniform Banach group modeled on a Hilbert space?
\end{question}
As already mentioned in the introduction, uniform Banach groups were originally introduced with connection to the infinite-dimensional Hilbert's fifth problem. The following question thus also seems to be natural.
\begin{question}
Are the (non-commutative) group operations on $\mathbb{X}$ (Fr\' echet) differentiable? Does there exist a non-commutative uniform Banach group that is a Banach-Lie group?
\end{question}

\end{document}